\newcommand{\rr}{\mathbb R}
\newcommand{\ball}{{B_r(x_0)}}
\newcommand{\ballo}{{B_r(0)}}
\newcommand{\omo}{\Omega_0}
\newcommand{\omt}{\Omega_t}
\newtheorem{theorem}{Theorem}
\newtheorem{lemma}[theorem]{Lemma}
\begin{document}

\title{Stability in $L^1$ of circular vortex patches}

\author{Thomas C.\ Sideris}
\address{Department of Mathematics\\ University of California\\ Santa Barbara, CA 93106\\ USA}
\email{sideris@math.ucsb.edu}
\thanks{T.C.S.\ was supported by a grant from the National Science Foundation.}

\author{Luis Vega}
\address{Departamento de Matem\'aticas\\ Universidad del Pa\'\i s Vasco\\ Apartado 644, 48080 Bilbao\\Spain\\}
\email{luis.vega@ehu.es}
\thanks{L.V. was supported by a grant from the Ministerio de Educaci\'on y Ciencia, MTM2007-62186.}

\thanks{The authors thank the anonymous referee for helpful comments.}

\subjclass[2000]{Primary 35Q35, secondary 76B47}

\date{May 28, 2009}

\begin{abstract}
The motion of incompressible and ideal fluids is studied in the plane.
The stability in $L^1$ of circular vortex patches is established among the class of all
bounded vortex
patches of equal strength.
\end{abstract}

\maketitle

For planar incompressible and ideal fluid flow, the theory of Yudovich \cite{yudo} establishes global
well-posedness of the initial value problem with initial vorticity in $L^1(\rr^2)\cap L^\infty(\rr^2)$.
Because vorticity is transported in 2d, it remains constant along particle trajectories.  If $\Phi_t$ is the flow map,
then the vorticity is given by $\omega(t,\Phi_t(y))=\omega(0,y)$, for all $t>0$ and $y\in\rr^2$.
When the initial vorticity is a patch of unit strength, represented by the indicator (characteristic) function $I_{\omo}$
of a bounded open set $\omo\subset\rr^2$, the resulting vorticity is $I_{\omt}$, with $\omt=\Phi_t(\omo)$.
In the special case when $\omo$ is equal to a ball $B$, the patch is stationary, $\Phi_t(B)=B$, for all $t>0$.
Theorem \ref{main}, our main result,   gives the stability in $L^1(\rr^2)$ of any circular patch  within the class of
all bounded vortex patches of equal strength.  No restriction is placed on the $L^1$ distance of the perturbation
to the ball, and the flow region is not limited to a bounded domain, but rather is the entire space $\rr^2$.

Wan and Pulvirenti \cite{pw} were the first to study stability  of vortex patches in $L^1$.  They considered the case
where the flow was contained in a bounded region, although for the stability of circular patches they mention  that
this assumption can be removed.
Their key estimate, {\bf (J)}, shows that the total angular momenta of the patches can be used to control the
$L^1$  difference between an arbitrary patch and a circular patch of the same total mass.  
They allow  the strengths of the patches differ, in which case the two patches are assumed to be close in $L^1$.
Our generalization of their inequality, given in Lemma \ref{ellone}, estimates the $L^1$ distance of an
arbitrary patch  to a circular patch, when both patches have equal strength.  Stability in $L^1$, given in Theorem \ref{main},
 follows immediately.
Weaker stability results were
given by Saffman \cite{saff} and Dritschel \cite{dritschel}.  Dritschel controls  the measure of the symmetric
difference of two patches through a convenient integral, and this idea is incorporated into our argument in Lemma \ref{set}.

Stability in $L^1$ does not imply that the boundaries of the two patches remain close in any metric.
Indeed, numerical simulations give strong evidence of fingering and filamentation, see \cite{buttke,dz}.  Spreading of vorticity
may also occur.  The best upper bound
for the  growth rate of the patch diameter is ${\mathcal O}(t\log t)^{1/4}$ given in \cite{isg}, see also \cite{march}.
 Nevertheless, in spite of the fact that the patch geometry may be complicated,
smoothness of smooth patch boundaries persists for all time, see  \cite{chemin}.

For any bounded open set $A\subset\rr^2$, denote its
mass, momentum, and angular momentum by
\[
|A|=\int_A dx,\quad M(A)=\int_A x\;dx,\quad\mbox{and}\quad i(A)=\int_A |x|^2dx,
\]
respectively.    Our arguments depend heavily upon the fact that these three quantities
are conserved in time
when $A=\omt$ is a patch moving with the flow.

\begin{lemma}\label{set}
If $A\subset\rr^2$ is any bounded open set, then
\[
i(A)-\frac{|A|^2}{2\pi}-\frac{|M(A)|^2}{|A|}\ge 0.
\]
Equality holds if and only if the set $A$ is a ball.
\end{lemma}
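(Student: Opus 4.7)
The plan is to reduce to a centered problem and then invoke a \textbf{bathtub / rearrangement} comparison with the ball of the same mass.

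First I would eliminate the $|M(A)|^2/|A|$ term by expanding around the centroid $c = M(A)/|A|$. A direct computation gives
\[
\int_A |x - c|^2 \, dx = i(A) - 2 c \cdot M(A) + |c|^2 |A| = i(A) - \frac{|M(A)|^2}{|A|},
\]
so the claimed inequality is equivalent to $\int_A |x-c|^2 \, dx \ge |A|^2/(2\pi)$. By translating $A$ by $-c$ (which does not affect $|A|$ or $\int |x-c|^2$), I may assume $c = 0$, and the task reduces to showing $\int_A |x|^2 \, dx \ge |A|^2/(2\pi)$ for any bounded $A \subset \rr^2$.

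Next, set $R = (|A|/\pi)^{1/2}$ and let $B^* = B_R(0)$, the centered ball having $|B^*| = |A|$. A direct polar-coordinate computation gives $\int_{B^*} |x|^2 \, dx = \pi R^4/2 = |A|^2/(2\pi)$, so it suffices to prove $\int_A |x|^2 \, dx \ge \int_{B^*} |x|^2 \, dx$. Writing
\[
\int_A |x|^2 \, dx - \int_{B^*} |x|^2 \, dx = \int_{A \setminus B^*} |x|^2 \, dx - \int_{B^* \setminus A} |x|^2 \, dx,
\]
I would exploit that $|A \setminus B^*| = |B^* \setminus A|$ (because $|A| = |B^*|$), together with $|x|^2 \ge R^2$ on $A \setminus B^*$ and $|x|^2 \le R^2$ on $B^* \setminus A$, to get
\[
\int_{A \setminus B^*} |x|^2 \, dx \ge R^2 |A \setminus B^*| = R^2 |B^* \setminus A| \ge \int_{B^* \setminus A} |x|^2 \, dx.
\]
This yields the desired inequality.

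Finally, for the equality case: equality in the chain above forces $|x|^2 = R^2$ almost everywhere on both $A \setminus B^*$ and $B^* \setminus A$. Since the circle $\{|x| = R\}$ has planar measure zero, both sets have measure zero, so $A$ coincides with $B^*$ up to a null set. Unwinding the translation, equality in the original statement holds precisely when $A$ is the ball of radius $R$ centered at $c = M(A)/|A|$. The only subtle point — hardly an obstacle — is the bookkeeping that reduces the three-term inequality to the centered one; the substance of the estimate is the simple layer-cake / symmetrization step in the display above.
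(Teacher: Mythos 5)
Your proof is correct, and it takes a genuinely different (though closely related) route from the paper's. You translate to the centroid, which absorbs the $|M(A)|^2/|A|$ term, and then run a bathtub/symmetrization comparison of $\int_A|x|^2\,dx$ against the centered ball $B^*$ of equal measure; every step checks, including the equality analysis (the circle $\{|x|=R\}$ being null forces $|A\triangle B^*|=0$). The paper instead introduces $Q(A;\ball)=\int_{A\triangle\ball}\bigl||x-x_0|^2-r^2\bigr|\,dx$ for an \emph{arbitrary} ball, observes $Q\ge0$, and completes the square to obtain the exact identity \eqref{identity}; the lemma then follows by choosing the ball with the same mass and centroid as $A$. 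The two arguments are cousins: your chain $\int_{A\setminus B^*}|x|^2\,dx\ge R^2|A\setminus B^*|=R^2|B^*\setminus A|\ge\int_{B^*\setminus A}|x|^2\,dx$ is precisely the statement $Q(A;B^*)\ge0$ unwound. What the paper's version buys is the identity \eqref{identity} itself, valid for every $r$ and $x_0$, which is reused twice downstream: to derive \eqref{prelim} in Lemma \ref{ellone} and to show in Theorem \ref{main} that $Q(\omt;B)$ is conserved. If your proof replaced the paper's, that identity would still have to be extracted separately (your first display essentially contains the ingredients). One shared caveat, not a gap relative to the paper: both arguments really show that equality holds iff $A$ agrees with a ball up to a null set; a bounded open set such as a punctured disk also attains equality without literally being a ball.
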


\begin{proof}
For any ball $\ball=\{x\in\rr^2: |x-x_0|<r\}$, introduce the quantity
\begin{equation}\label{qdef}
Q=Q(A;\ball)=
\int\limits_{A\triangle \ball} \left||x-x_0|^2-r^2\right|\;dx,
\end{equation}
in which $A\triangle \ball=(A\setminus\ball)\cup(\ball\setminus A)$ denotes the symmetric difference.
Note that $Q\ge0$ and $Q=0$ if and only if $A=\ball$.

The quantity $Q$ can also be written as
\[
Q=\int_A(|x-x_0|^2-r^2)\;dx+\int_\ball(r^2-|x-x_0|^2)\;dx,
\]
since the portions of these two integrals over the set $A\cap\ball$ cancel.

Now, we can expand the first integral in $Q$ and compute the second to obtain
\[
Q=i(A)-2x_0\cdot M(A)+(|x_0|^2-r^2)|A| +\frac\pi2r^4.
\]
A rearrangement of terms gives
\begin{equation}\label{identity}
Q=i(A)-\frac{|A|^2}{2\pi}-\frac{|M(A)|^2}{|A|}+\frac1{2\pi}\left(\pi r^2-|A|\right)^2+|A|\left|x_0-\frac{M(A)}{|A|}\right|^2.
\end{equation}
This last expression is minimized by choosing $\ball$ with the same mass and center of mass as $A$:
\[
|\ball|=\pi r^2=|A|\quad\mbox{and}\quad x_0=\frac{M(A)}{|A|}.
\]
With this choice, the Lemma now follows.
\end{proof}

\begin{lemma}\label{ellone}
If $B=\ballo$, then for any bounded open set $A$,
\[
\|I_{A}-I_B\|^2_{L^1}\le 4\pi\;
Q(A;B)
\]
in which $Q(A;B)$ is defined by \eqref{qdef}.  
Equality holds if and only if 
\begin{equation}
\label{specialcase}
A=B_a(0)\cup[B_b(0)\setminus\ballo],
\end{equation}
with $a<r<b$ and $r^2-a^2=b^2-r^2$.
\end{lemma}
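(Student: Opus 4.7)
The plan is to reduce the inequality to a one-dimensional variational problem via polar coordinates. For $\rho > 0$, let $\ell(\rho)$ denote the angular Lebesgue measure of $\{\theta \in [0,2\pi): \rho e^{i\theta} \in A \triangle B\}$, and define $\beta:(0,\infty)\to[0,1]$ by $\beta(u)=\ell(\sqrt u)/(2\pi)$. Passing to polar coordinates and substituting $u=\rho^2$ turns both quantities of interest into weighted integrals of $\beta$:
\[
\|I_A-I_B\|_{L^1}=|A\triangle B|=\pi\int_0^\infty\beta(u)\,du,\qquad Q(A;B)=\pi\int_0^\infty|u-r^2|\,\beta(u)\,du.
\]
Setting $m:=\int_0^\infty\beta(u)\,du$, the desired inequality becomes the one-dimensional statement
\[
m^2\le 4\int_0^\infty|u-r^2|\,\beta(u)\,du.
\]

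The next step is to prove this reduced inequality by exploiting the pointwise bound $\beta\le 1$, in a bathtub-principle style. I extend $\beta$ by zero for $u\le 0$ and introduce
\[
\gamma(t)=\int_0^t\bigl[\beta(r^2-v)+\beta(r^2+v)\bigr]\,dv,
\]
which is nondecreasing with $\gamma(\infty)=m$ and $\gamma'\le 2$, so $\gamma(t)\le\min(2t,m)$. Fubini gives
\[
\int_0^\infty|u-r^2|\,\beta(u)\,du=\int_0^\infty v\,\gamma'(v)\,dv=\int_0^\infty\bigl(m-\gamma(t)\bigr)dt\ge\int_0^{m/2}(m-2t)\,dt=\frac{m^2}{4},
\]
which is exactly the required bound.

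For the equality case, each of the estimates above must be tight. This forces $\gamma(t)=2t$ on $(0,m/2)$ and $\gamma(t)=m$ thereafter. Since $\beta(r^2\pm v)\in[0,1]$, the first condition requires $m/2\le r^2$ (so that $r^2-v>0$ and the zero extension plays no role on $(0,m/2)$) together with $\beta(r^2-v)=\beta(r^2+v)=1$ for a.e.\ $v\in(0,m/2)$; the second gives $\beta=0$ off $(r^2-m/2,\,r^2+m/2)$. Writing $a^2=r^2-m/2$ and $b^2=r^2+m/2$, so that $r^2-a^2=b^2-r^2$, and unwinding the definition of $\beta$, this says that, up to a null set, $A$ contains every circle of radius $\rho\in(0,a)\cup(r,b)$ and is disjoint from every circle of radius $\rho\in(a,r)\cup(b,\infty)$, i.e.\ $A=B_a(0)\cup[B_b(0)\setminus B_r(0)]$.

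The only delicate point, and the reason equality takes exactly the stated form, is the interplay between the reflection about $r^2$ and the boundary at $u=0$. When $m>2r^2$ the zero extension of $\beta$ unavoidably creates a strict gap $\gamma'(t)<2$ near $t=r^2$, so the inequality is strict; this rules out any would-be equality configuration with $a^2<0$ and confines the equality case to the annular form $a<r<b$ above.
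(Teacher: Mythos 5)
Your proof is correct, and it takes a genuinely different route from the paper's. The paper deduces the lemma from Lemma \ref{set}: via the identity \eqref{identity} it first obtains $(|A'|-\pi r^2)^2\le 2\pi\,Q(A';B)$ for every bounded open $A'$, then bounds $|A\triangle B|^2\le 2(|A\cup B|-|B|)^2+2(|A\cap B|-|B|)^2$ and applies the preliminary inequality to $A'=A\cup B$ and $A'=A\cap B$, using the additivity $Q(A\cup B;B)+Q(A\cap B;B)=Q(A;B)$; rigidity in Lemma \ref{set} then yields the equality case. You instead push everything to the half-line via $u=|x|^2$: the angular density $\beta$ of $A\triangle B$ satisfies $0\le\beta\le1$, both sides become weighted integrals of $\beta$ (your constants check out), and the folded function $\gamma(t)=\int_0^t[\beta(r^2-v)+\beta(r^2+v)]\,dv$ with $\gamma(t)\le\min(2t,m)$, combined with $\int_0^\infty v\,\gamma'(v)\,dv=\int_0^\infty(m-\gamma)\ge m^2/4$, is a correct bathtub argument. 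Your equality analysis rightly forces $\beta=1$ a.e.\ on $(r^2-m/2,\,r^2+m/2)$ and $\beta=0$ a.e.\ outside, which, since $A\triangle B$ lies in $B\setminus A$ for $\rho<r$ and in $A\setminus B$ for $\rho>r$, is exactly \eqref{specialcase}; like the paper, you pin down $A$ only up to a Lebesgue-null set, which is the accepted level of precision here. The trade-off: the paper's route is shorter because it recycles Lemma \ref{set} and the identity \eqref{identity}, which are needed anyway for the conservation argument in Theorem \ref{main}, while yours is self-contained, bypasses Lemma \ref{set} entirely, and makes the shape of the extremizer --- mass packed as close to the circle $|x|=r$ as the constraint $\beta\le1$ permits --- completely transparent. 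Your closing remark on $m>2r^2$ is a correct consistency check, though it is already subsumed by the requirement $\beta(r^2-v)=1$ (hence $r^2-v>0$) for a.e.\ $v\in(0,m/2)$.
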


\begin{proof}
Using  the identity \eqref{identity} and then Lemma \ref{set},
we have  for any bounded open set $A'$, 
\begin{equation}\label{prelim}
(|A'|-|B|)^2=(|A'|-\pi r^2)^2\le 2\pi\;Q(A';B),
\end{equation}
with equality if and only if $A'$ is a ball centered at the origin.

Next, we note that
\begin{align*}
\|I_{A}-I_B\|^2_{L^1}
&=|A \Delta B|^2\\
&=\left(|A\setminus B|+|B\setminus A|\right)^2\\
&\le2|A\setminus B|^2+2|B\setminus A|^2\\
&=2(|A\cup B|-|B|)^2+2(|A\cap B|-|B|)^2,
\end{align*}
with equality if and only if $|A\setminus B|=|B\setminus A|$.

Application of \eqref{prelim} with $A'=A\cup B$ and $A'=A\cap B$ yields
\begin{multline*}
2(|A\cup B|-|B|)^2+2(|A\cap B|-|B|)^2\\
\le 4\pi\;[Q(A\cup B;B)+Q(A\cap B;B)]=4\pi \; Q(A;B),
\end{multline*}
with equality if and only if $A\cup B$ and $A\cap B$ are balls centered at the origin.

This establishes the desired inequality.  The argument also shows that equality holds
if and only if $A\cup B=B_b(0)$, $A\cap B=B_a(0)$, with $a<r<b$, and
\[
|B_b(0)\setminus B|=|A\setminus B|=|B\setminus A|=|B\setminus B_a(0)|,
\]
which gives \eqref{specialcase}.
\end{proof}

\begin{theorem}\label{main}
Let $B=\ballo$.  Then for any bounded open set $\omo\subset\rr^2$, we have that
\[
\|I_{\omt}-I_B\|^2_{L^1}\le 4\pi\;\sup_{\omo\triangle B}\left||x|^2-r^2\right|\;\|I_{\omo}-I_B\|_{L^1},
\]
for all $t>0$.
\end{theorem}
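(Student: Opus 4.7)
The plan is to combine Lemma \ref{ellone} applied at time $t$ with the conservation laws for $|\omt|$ and $i(\omt)$. First, I would apply Lemma \ref{ellone} with $A=\omt$ and the given $B=\ballo$ to get
\[
\|I_{\omt}-I_B\|^2_{L^1}\le 4\pi\;Q(\omt;B).
\]
The whole argument now reduces to showing that $Q(\omt;B)=Q(\omo;B)$ and then bounding $Q(\omo;B)$ by a sup times $|\omo\triangle B|$.

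For the conservation step, I would revisit the formula obtained in the proof of Lemma \ref{set}: specializing to $x_0=0$ (since $B$ is centered at the origin), the computation there yields
\[
Q(\omt;B)=i(\omt)-r^2|\omt|+\tfrac{\pi}{2}r^4.
\]
This expresses $Q(\omt;B)$ as an affine combination of the conserved quantities $i(\omt)$ and $|\omt|$ (the constant term depending only on the fixed ball $B$). Since mass and angular momentum are transported invariants of the $2$D Euler flow, as noted in the paper right before Lemma \ref{set}, we conclude
\[
Q(\omt;B)=Q(\omo;B) \qquad \text{for all } t>0.
\]

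Finally, I would estimate $Q(\omo;B)$ directly from its definition in \eqref{qdef} by pulling the weight $||x|^2-r^2|$ out as a supremum:
\[
Q(\omo;B)=\int_{\omo\triangle B}\bigl||x|^2-r^2\bigr|\,dx
\le \sup_{\omo\triangle B}\bigl||x|^2-r^2\bigr|\,|\omo\triangle B|,
\]
and observe that $|\omo\triangle B|=\|I_{\omo}-I_B\|_{L^1}$. Chaining these three inequalities yields the theorem.

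There is no real obstacle here; once Lemmas \ref{set} and \ref{ellone} are in hand, the only conceptual point is the recognition that $Q(\cdot;B)$ is a conserved quantity of the flow when $B$ is a ball centered at the origin. The stationarity of the circular patch (so that $B$ itself does not move) is essential for this, since otherwise $Q(\omt;B(t))$ would involve a moving center $x_0(t)$ and the momentum $M(\omt)$ would also have to be invoked in the cancellation.
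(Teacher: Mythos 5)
Your proposal is correct and follows essentially the same route as the paper: apply Lemma \ref{ellone} at time $t$, observe that $Q(\omt;B)$ is expressible in conserved quantities (the paper cites \eqref{identity}, while you use the equivalent unrearranged formula $i(\omt)-r^2|\omt|+\tfrac{\pi}{2}r^4$ with $x_0=0$), and bound $Q(\omo;B)$ by the supremum of the weight times $|\omo\triangle B|$. Nothing is missing.
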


\begin{proof}
The identity \eqref{identity} shows that  the quantity $Q(\omt;B)$ depends only on conserved quantities, and it is therefore also
conserved.   In other words, we have
 $Q(\omt;B)=Q(\omo;B)$, for all $t>0$.
Thus, the result follows from Lemma \ref{ellone} and the fact that
\[
Q(\omo;B)\le \sup_{\omo\triangle B}\left||x|^2-r^2\right|\;|\omo\triangle B|=\sup_{\omo\triangle B}\left||x|^2-r^2\right|\;\|I_{\omo}-I_B\|_{L^1}.
\]
\end{proof}

\end{document}